\documentclass[11pt]{amsart}

\usepackage{a4wide,amsmath,amsfonts,amssymb,amsthm,mathrsfs,bbm}
\usepackage[hyperfootnotes=false]{hyperref}
\usepackage[dvipsnames]{xcolor}
\usepackage[shortlabels]{enumitem}
\allowdisplaybreaks[2]

\newtheorem{theorem}{Theorem}[section]
\newtheorem{lemma}[theorem]{Lemma}

\newtheorem{remark}[theorem]{Remark}
\newtheorem{proposition}[theorem]{Proposition}

\newtheorem{example}[theorem]{Example}
\newtheorem{assumption}[theorem]{Assumption}

\numberwithin{equation}{section}
\allowdisplaybreaks

\renewcommand{\d}{\mathrm{d}}
\newcommand{\dd}{\,\mathrm{d}}
\newcommand{\R}{\mathbb{R}}
\newcommand{\N}{\mathbb{N}}
\newcommand{\E}{\mathbb{E}}
\renewcommand{\P}{\mathbb{P}}
\newcommand{\bP}{\mathbb{P}}
\newcommand{\PP}{\mathbb{P}}
\newcommand{\W}{\mathcal{W}}
\renewcommand{\d}{\mathrm{d}}
\renewcommand{\epsilon}{\varepsilon}
\renewcommand{\P}{\mathbb{P}}
\newcommand{\cF}{\mathcal{F}}
\newcommand{\cP}{\mathcal{P}}
\def\cL{\mathscr L}

\title[Non-uniqueness for McKean--Vlasov SDEs with H\"older diffusion coefficients]{Non-uniqueness for McKean--Vlasov SDEs with H\"older diffusion coefficients}

\author[Bergerhausen]{Martin Bergerhausen}
\address{Martin Bergerhausen, University of Mannheim, Germany}
\email{martin.bergerhausen@uni-mannheim.de}

\author[Nikolaev]{Paul Nikolaev}
\address{Paul Nikolaev, Technical University of Berlin, German}
\email{paul.nikolaev@tu-berlin.de}

\author[Pr{\"o}mel]{David J. Pr{\"o}mel}
\address{David J. Pr{\"o}mel, University of Mannheim, Germany}
\email{proemel@uni-mannheim.de}

\date{\today}

\begin{document}

\begin{abstract}
  We show that uniqueness may fail for one-dimensional McKean--Vlasov stochastic differential equations with H\"older continuous diffusion coefficients. More precisely, for every \(\alpha\in(1/2,1)\), we construct an equation with a diffusion coefficient that is \(\alpha\)-H\"older continuous in the state variable, Lipschitz continuous in the measure variable, and of linear growth, for which uniqueness in law and pathwise uniqueness fail. The construction relies on a spreading estimate together with a Schauder fixed-point argument.
\end{abstract}

\maketitle

\noindent \textbf{Key words:} McKean--Vlasov stochastic differential equations; non-uniqueness; H\"older diffusion coefficients; pathwise uniqueness; uniqueness in law.

\noindent \textbf{MSC 2020 Classification:} 60H10, 60H20.



\section{Introduction}

McKean--Vlasov stochastic differential equations (McKean--Vlasov SDEs), also known as mean-field or distribution-dependent SDEs, extend ordinary stochastic differential equations by allowing the coefficients to depend not only on the current state of the solution process, but also on its law. Since the seminal works of Kac~\cite{Kac56}, McKean~\cite{McKean66}, and Vlasov~\cite{Vlasov1968}, McKean--Vlasov SDEs have developed into a central topic in probability theory, with close connections to nonlinear partial differential equations, and play an important role, among other areas, in the study of interacting particle systems, mean-field limits, and propagation of chaos; see, e.g., \cite{Sznitman1991,Jabin2017,Carmona2018} for comprehensive overviews. A fundamental problem is to determine regularity conditions on the coefficients of McKean--Vlasov SDEs that guarantee existence of a solution and its uniqueness, either pathwise or in law.

Under Lipschitz continuity of the coefficients in the state and measure variables, strong well-posedness of McKean--Vlasov SDEs is classical; see, e.g., \cite{Sznitman1991}. A substantial literature is devoted to relaxing these assumptions in several directions. One approach replaces global Lipschitz continuity by weaker structural conditions, such as monotonicity, Osgood-type or Lyapunov conditions; see, e.g., \cite{Wang2018,Bahlali2020,Hammersley2021,Kalinin2024b}. Another important line of research exploits the regularizing effect by noise in case of non-degenerate diffusion coefficients, which can yield weak or strong uniqueness under considerably weaker regularity assumptions; see, e.g., \cite{Huang2019,Mehri2019,Mishura2020,Veretennikov2021,Pascucci2025}. Particularly relevant to the present work is the classical result of Yamada and Watanabe~\cite{Yamada1971}, which states that for one-dimensional ordinary SDEs, pathwise uniqueness holds provided the drift coefficient is sufficiently regular and the diffusion coefficient is H\"older continuous with exponent at least \(1/2\). Yamada--Watanabe-type arguments have also been extended to McKean--Vlasov SDEs under various non-Lipschitz regularity assumptions on the diffusion coefficient; see, for instance, \cite{Bahlali2020,Bao2022,Promel_MF,Huang2023,Kalinin2024,Ning2026}. In the Brownian setting, these results typically impose comparatively strong regularity assumptions on the dependence of the diffusion coefficient on the law of the solution.

The purpose of the present paper is to show that the classical Yamada--Watanabe uniqueness threshold does not extend in full generality to McKean--Vlasov SDEs with Brownian noise. For every $\alpha\in(1/2,1)$, we construct a one-dimensional McKean--Vlasov SDE with Lipschitz continuous drift and a diffusion coefficient that is $\alpha$-H\"older continuous in the state variable, Lipschitz continuous in the measure variable with respect to the $1$-Wasserstein distance, and of linear growth, for which both uniqueness in law and pathwise uniqueness fail, see Theorem~\ref{thm:main}. More precisely, the presented counterexample is of the form
\begin{equation*}
   \dd X_t=\big(C_1|X_t|^\alpha+F(\cL(X_t))\big)\dd B_t,\qquad X_0 \sim\delta_0,
\end{equation*}
where $F$ is a Lipschitz functional of the solution's law $\cL(X_t)$ satisfying $F(\delta_0)=0$ and $F(\mu)\geq \kappa \|\mu\|_1$ for all probability distributions $\mu$ and some constant $\kappa > 0$, and $B=(B_t)_{t\in [0,T]}$ is a one-dimensional Brownian motion. Thus, spatial H\"older continuity of any order strictly larger than $1/2$, together with Lipschitz continuity in the law variable, does not by itself guarantee either pathwise uniqueness or uniqueness in law. Moreover, the diffusion coefficient is degenerate, highlighting the role of non-degeneracy in uniqueness results that rely on the regularizing effect of noise. Let us also remark that non-uniqueness caused by the dependence on the law had already been observed by Scheutzow~\cite{Scheutzow1987} for a class of Vlasov--McKean equations without Brownian diffusion.


\medskip
\noindent \textbf{Organization of the paper:} Section~\ref{sec:main result} introduces the setting and states the non-uniqueness result and Section~\ref{sec:proof} contains its proof. Appendix~\ref{sec:appendix} provides the proof of an auxiliary weak existence result.

\medskip
\noindent\textbf{Acknowledgments:} P. Nikolaev gratefully acknowledges funding by the German Research Foundation (DFG) through CRC/TRR 388 ``Rough Analysis, Stochastic Dynamics and Related Fields'', Project A11. D. J. Pr\"omel gratefully acknowledges his affiliation with the Department of Mathematics at King’s College London, United Kingdom. M. Bergerhausen gratefully acknowledges the hospitality of BI Norwegian Business School during a research visit, where part of this work was carried out.

\medskip
\noindent\textbf{AI Disclosure:} The mathematical content of this paper is entirely human-generated. AI tools were employed only for proofreading, grammatical corrections, and linguistic editing. 

\section{Main result: Non-uniqueness for McKean--Vlasov SDEs}\label{sec:main result}
 
Fix $T>0$ and a filtered probability space $(\Omega,\mathcal F,(\mathcal F_t)_{t \in [0,T]},\PP)$ satisfying the usual conditions and carrying a one-dimensional Brownian motion $B=(B_t)_{t\in [0,T]}$ as well as an $\cF_0$-measurable random variable $\xi$. Let $(\R,|\cdot|)$ be the Euclidean space and let $\cP_p(\R)$ denote the probability measures with $p$ finite moments, equipped with the $p$-Wasserstein metric $\W_p$, that is, for $\mu,\nu\in\mathcal{P}_p(\R)$ we define $\W_p(\mu,\nu)$ by
\begin{equation*}
  \W_p(\mu,\nu):= \inf_{\pi \in \Pi(\mu,\nu)} \Big[ \int_{\R \times \R} |x-y|^p \,\pi(\d x,\d y)\Big]^{1/p},
\end{equation*}
where $\Pi(\mu,\nu)$ denotes the set of probability measures on $\R\times \R$ with marginals $\mu$ and $\nu$, see \cite[Chapter~5]{Carmona2018} for more details, and $\|\mu\|_1:=\int_\R|y|\,\mu(\d y)$. We denote the law of some random variable $X$ by $\cL(X)$ and the Dirac measure at $0$ by $\delta_0$.

\smallskip

We consider the McKean--Vlasov stochastic differential equation (McKean--Vlasov SDE)
\begin{equation}\label{eq:MV}
  \dd X_t=b(t,X_t,\mu_t)\dd t+\sigma(t,X_t,\mu_t)\dd B_t,\qquad
  \mu_t=\cL(X_t),\qquad X_0 =\xi \sim\mu_0,
\end{equation}
where $b, \sigma \colon [0,T] \times \R \times \cP_1(\R) \to \R$ are measurable functions and $\mu_0 \in \cP_2(\R)$. The integral w.r.t. the Brownian motion $B$ is defined as a stochastic It{\^o} integral. For general introductions to McKean--Vlasov SDEs refer to \cite{Sznitman1991,Jabin2017,Carmona2018}.

\smallskip

Let us recall the concepts of strong and weak solutions and pathwise uniqueness. We call a continuous stochastic process $(X_t)_{t \in [0,T]}$ with $\sup_{t \in [0,T]} \E[|X_t|^p]< \infty$ on the given filtered probability space $(\Omega,\mathcal F,(\mathcal F_t)_{t \in [0,T]},\PP)$ a \emph{strong $L^p$-solution} of the McKean--Vlasov SDE~\eqref{eq:MV} if it is adapted to the usual augmented filtration of $\sigma(\xi,B_s\colon 0 \leq s \leq t)$, ${t \in [0,T]}$, the integral equation~\eqref{eq:MV} holds $\P$-almost surely, and $\int_0^t |b(s,X_s,\mu_s)| + |\sigma(s,X_s,\mu_s)|^2 \dd s < \infty$ $\P$-almost surely for all $t \in [0,T]$. A \emph{weak solution} of $\eqref{eq:MV}$ is a triple $(X,B)$, $(\Omega,\mathcal F,\PP)$, $(\mathcal F_t)_{t \in [0,T]}$, such that
\begin{itemize}
  \item $(\Omega,\mathcal F,\PP)$ is a probability space, $(\mathcal F_t)_{t \in [0,T]}$ is a filtration of sub-$\sigma$-algebras of $\mathcal F$ satisfying the usual conditions,
  \item $X = (X_t)_{t \in [0,T]}$ with $\sup_{t \in [0,T]} \E[|X_t|]< \infty$ is an $(\mathcal F_t)_{t \in [0,T]}$-adapted, continuous process, $B = (B_t)_{t \in [0,T]}$ is a Brownian motion with respect to $(\mathcal F_t)_{t \in [0,T]}$,
  \item $\int_0^t |b(s,X_s,\mu_s)| + |\sigma(s,X_s,\mu_s)|^2 \dd s < \infty$ $\P$-a.s. for any $t \in [0,T]$, and
  \item the integral equation in \eqref{eq:MV} holds $\P$-a.s. for $(X,B)$ on $(\Omega,\mathcal F,\PP)$ and $X_0 \sim\mu_0$.
\end{itemize}
We say \emph{uniqueness in law} holds for the McKean--Vlasov SDE~\eqref{eq:MV} if any two weak solutions possess the same law. Moreover, we say \emph{pathwise uniqueness} holds for the McKean--Vlasov SDE~\eqref{eq:MV} if for every two weak solutions $(X,B)$, $(\Omega,\mathcal F,\PP)$, $(\mathcal F_t)_{t \in [0,T]}$ and $(\tilde X,B)$, $(\Omega,\mathcal F,\PP)$, $(\tilde{\mathcal F}_t)_{t \in [0,T]}$ with common Brownian motion and with common initial value, i.e. $\PP[X_0 = \tilde{X}_0]=1$, the two processes $X$ and $\tilde X$ are indistinguishable.

\begin{assumption}\label{ass:main}
  There are $L, C_\sigma>0$ and $\alpha\in(\tfrac12,1]$ such that for all $t \in [0,T]$, $x,y \in \R$ and $\mu,\nu \in \cP_1(\R)$ we have:
  \begin{enumerate}[label=\textup{(A\arabic*)},leftmargin=3.2em]
    \item $|b(t,x,\mu)-b(t,y,\nu)|\le L\bigl(|x-y|+\W_1(\mu,\nu)\bigr)$ and $|b(t,x,\mu)|\le L\bigl(1+|x|+\|\mu\|_1\bigr)$,
    \item\label{A2} $|\sigma(t,x,\mu)-\sigma(t,y,\nu)|\le C_\sigma \bigl(|x-y|^\alpha + \W_1(\mu, \nu)\bigr)$, together with linear growth condition $|\sigma(t,x,\mu)|\le C_\sigma(1+|x|+\|\mu\|_1)$. 
  \end{enumerate}
\end{assumption}

The standard continuity and growth assumptions postulated in Assumption~\ref{ass:main} guarantee the existence of solutions to the McKean--Vlasov SDE~\eqref{eq:MV}.

\begin{remark}\label{prop:existence}
  Under Assumption~\ref{ass:main} the McKean--Vlasov SDE~\eqref{eq:MV} admits a weak solution $(X,B)$, $(\Omega,\mathcal F,\PP)$, $(\mathcal F_t)_{t \in [0,T]}$ with $\sup_{t \in [0,T]} \E[|X_t|^2]< \infty$, which follows by \cite[Theorem~2.10]{Hammersley2021}, as verified in Appendix~\ref{sec:appendix} below.

  If we have $\alpha = 1$ in Assumption~\ref{ass:main} and restrict the coefficients to $\cP_2(\R)$, then the diffusion coefficient is Lipschitz in law and space and McKean--Vlasov SDE~\eqref{eq:MV} has a strong solution and pathwise uniqueness holds, see \cite[Theorem~4.21]{Carmona2018}.
\end{remark}

However, for $\alpha\in(\frac12,1)$ uniqueness in law and pathwise uniqueness fail in general under Assumption~\ref{ass:main}, as presented in the following theorem.

\begin{theorem}\label{thm:main}
  Let $F\colon \cP_1(\R) \to [0,\infty)$ satisfy $F(\delta_0)=0$, $|F(\mu)-F(\nu)|\leq L_F \W_1(\mu,\nu)$, and $F(\mu)\geq \kappa \|\mu\|_1$ for all $\mu,\nu \in \cP_1(\R)$ and for some $\kappa, L_F > 0$. Let $\alpha\in(\tfrac12,1)$, $C_1>0$, define on $[0,T]\times\R\times\cP_1(\R)$ the coefficients
  \begin{equation}\label{eq:McKV_counterexample}
    b\equiv0\quad \text{and}\quad
    \sigma(t,x,\mu):=C_1|x|^{\alpha}+F(\mu),
  \end{equation}
  and consider \eqref{eq:MV} with initial condition $\mu_0:=\delta_0$. Then, the following holds:
  \begin{enumerate}[label=\textup{(\roman*)}]
    \item $b$ and $\sigma$ satisfy Assumption~\ref{ass:main} with H\"older exponent $\alpha$. Consequently, for every $p \geq 1$, the restriction of $\sigma$ to $\cP_p(\R)$ is Lipschitz continuous with respect to $\W_p$.
    \item $X\equiv0$ solves the McKean--Vlasov SDE~\eqref{eq:MV}, with flow $\mu_t\equiv\delta_0$.
    \item There exists a strong $L^2$-solution $Y^{*}$ of \eqref{eq:MV} that satisfies the inequality 
    \begin{equation*}
      \E[|Y^{*}_t|]\;\ge\; c_0 t^{\gamma},\qquad t\in[0,T_0],
    \end{equation*}
    for some $T_0=T_0(L_F,T)\in(0,T]$, $\varepsilon_0=\varepsilon_0(\alpha,C_1,\kappa)>0$, both explicit, $c_0:=\frac{\min\{\epsilon_0,1\}}{L_F}>0$, and $\gamma=\frac{\alpha}{1-\alpha}$.
    In particular, its flow $(\nu_t)_{t \in [0, T]}$ satisfies $\nu_t\neq\delta_0$ for every $t\in(0,T_0]$.
  \end{enumerate}
\end{theorem}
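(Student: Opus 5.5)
Parts (i) and (ii) are direct checks. For (i), use that $x\mapsto|x|^\alpha$ is $\alpha$-Hölder with constant $1$, so $\bigl||x|^\alpha-|y|^\alpha\bigr|\le|x-y|^\alpha$. Lipschitz continuity of $F$ gives the measure part of \ref{A2} with $C_\sigma=\max\{C_1,L_F\}$. For linear growth, note $|x|^\alpha\le 1+|x|$ and $F(\mu)\le F(\delta_0)+L_F\W_1(\mu,\delta_0)=L_F\|\mu\|_1$. Since $\W_1\le\W_p$, the $\W_p$-Lipschitz property of the restriction follows. For (ii), $\sigma(t,0,\delta_0)=F(\delta_0)=0$, so $X\equiv0$ solves \eqref{eq:MV} with $\mu_t\equiv\delta_0$.

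For (iii) I will use a Schauder fixed-point argument on flows of measures bounded below by the spreading profile. The key heuristic is this: if $\E|Y_t|\ge c t^\gamma$, then $\sigma\ge F(\nu_t)\ge\kappa c t^\gamma$, and the $C_1|Y|^\alpha$ term makes the process spread in a self-reinforcing way. The exponent $\gamma=\alpha/(1-\alpha)$ is exactly the scaling for which $\frac{d}{dt}\E|Y|^2\sim\E|Y|^{2\alpha}$ closes with $\E|Y|\sim t^\gamma$. Indeed, from $\dot m\approx m^{2\alpha}$ with $m=\E|Y|^2\sim t^{2\gamma}$ we get $2\gamma-1=2\alpha\gamma$, i.e.\ $\gamma=1/(2(1-\alpha))$ for the $L^2$ norm. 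So the plan will have to choose the right quantity.

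I will fix $T_0$ and consider the set
\[
\mathcal K=\bigl\{(\nu_t)_{t\in[0,T_0]}\text{ continuous in }\W_1:\ \|\nu_t\|_1\ge c_0t^\gamma,\ \textstyle\int|y|^2\nu_t(\d y)\le M,\ \W_1(\nu_t,\nu_s)\le K|t-s|^{1/2}\bigr\}.
\]
This set is convex (the first moment is linear in $\nu$, and the constraints are convex). It is also compact in $C([0,T_0],\cP_1)$ with the uniform $\W_1$ metric, by a second-moment tightness argument plus Arzelà–Ascoli. The map $\Phi(\nu)$ is the law flow of the solution of the ordinary SDE
\[
\d Y_t=\bigl(C_1|Y_t|^\alpha+F(\nu_t)\bigr)\d B_t,\qquad Y_0=0.
\]
This equation has a pathwise unique strong solution by Yamada–Watanabe, since $\alpha>1/2$ and the coefficient is time-dependent but continuous in $t$. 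Standard BDG/Gronwall bounds give the moment and Hölder constraints on $\Phi(\nu)$ for suitable $M,K$ and small $T_0$. Continuity of $\Phi$ follows from stability of Yamada–Watanabe strong solutions in the coefficient, via an $L^1$ estimate with the Yamada–Watanabe function. A fixed point then yields a strong $L^2$-solution $Y^*$ with $\E|Y^*_t|\ge c_0t^\gamma$.

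The main obstacle is invariance of the lower bound, i.e.\ the spreading estimate $\E|Y_t|\ge c_0t^\gamma$ given $F(\nu_t)\ge\kappa c_0t^\gamma$. I would first bound from below $\E|Y_t|^2=\E\int_0^t\sigma_s^2\,\d s$, using $\sigma_s\ge C_1|Y_s|^\alpha$ and $\sigma_s\ge\kappa c_0 s^\gamma$. I would then convert this to a first-moment bound using a reverse Hölder/Paley–Zygmund-type inequality, controlling $\E|Y_t|^4$ from above by BDG and the upper bounds in $\mathcal K$. The same $\E|Y|^\alpha$-type term also gives a lower bound via Tanaka: $\E|Y_t|$ grows through local time at $0$, whose rate is at least of order $\sigma(0)\ge F(\nu_t)$. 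Combining these, I aim for $\E|Y_t|\ge \epsilon_0\kappa^{-?}\min\{\ldots\}t^\gamma$ with $\epsilon_0$ depending only on $(\alpha,C_1,\kappa)$. The $1/L_F$ in $c_0$ should enter through the upper bound $F\le L_F\|\cdot\|_1$ used in the moment controls. If this direct route is too lossy, I would replace the constraint in $\mathcal K$ by a lower bound on the second moment, $\int y^2\nu_t\ge c t^{2\gamma}$, which propagates more cleanly via Itô's isometry and Jensen, and deduce the first-moment bound at the end.

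Finally, $Y^*$ differs from the solution $X\equiv0$, and both are strong solutions with the same Brownian motion and initial value. Hence pathwise uniqueness and uniqueness in law fail, and in particular $\nu_t\ne\delta_0$ for $t\in(0,T_0]$.
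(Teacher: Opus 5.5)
\textbf{There is a genuine gap in (iii): the lower bound is never made invariant.}

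Your treatment of (i), (ii), the a priori upper bounds and the continuity of $\Phi$ is fine. Running Schauder on measure flows instead of on the scalar function $r=F(\nu_\cdot)$, as the paper does, is a harmless variation; it only needs an ambient locally convex space.

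The gap is the step you yourself flag: none of your tools gives $\E|Y_t|\ge c_0t^\gamma$ from $F(\nu_s)\ge\kappa c_0 s^\gamma$.
\begin{itemize}
\item Any argument using only the additive floor $\sigma_s\ge m:=\kappa c_0s^\gamma$ loses a factor $\sqrt t$. It\^o's isometry gives only $\E Y_t^2\ge\kappa^2c_0^2t^{2\gamma+1}/(2\gamma+1)$. For the Tanaka/local-time route, the case $\sigma\equiv m$ shows that no bound based on $\sigma\ge m$ can exceed $m\sqrt{2t/\pi}$. Both are of order $t^{\gamma+1/2}\ll t^\gamma$.
\item The gain must therefore come from $C_1|Y|^\alpha$. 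But Jensen gives $\E|Y|^{2\alpha}\le(\E Y^2)^\alpha$, which is the wrong direction.
\item The reverse-H\"older step $\E|Y_t|\ge(\E Y_t^2)^{3/2}/(\E Y_t^4)^{1/2}$ would need a scale-invariant bound $\E Y_t^4\le K(\E Y_t^2)^2$ as $t\downarrow0$. Your $\mathcal K$ only gives bounds of order $1$ (or $t$), so the ratio degenerates like a higher power of $t$ and cannot reproduce $c_0t^\gamma$ with the same constant.
\item The fallback fails too. $F(\nu)\ge\kappa\|\nu\|_1$ only sees the first moment, and a lower bound on $\int y^2\,\nu_t(\d y)$ gives no lower bound on $\|\nu_t\|_1$.
\end{itemize}
Your lower-bound sketch never uses $\alpha>\tfrac12$. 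Your scaling heuristic also produces $1/(2(1-\alpha))$ rather than the claimed $\gamma=\alpha/(1-\alpha)$, and you leave that mismatch open.

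\textbf{The missing idea is the paper's spreading estimate, Lemma~\ref{lem:spreading}.} Suppose $Y_s=Y_{t_0}+\int_{t_0}^sH_u\dd B_u$ with $|H_u|\ge C_1|Y_u|^\alpha+m$. Then, whatever the random value $Y_{t_0}$, one has $\E|Y_{t_1}|\ge h/(16S_m)$. Here $S_m=\int_0^\infty(C_1z^\alpha+m)^{-2}\dd z=J_\alpha C_1^{-1/\alpha}m^{-\beta}$, which is finite precisely because $\alpha>\tfrac12$.

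The proof runs as follows.
\begin{enumerate}
\item Take $V$ with $V''=-2/\bar a^2$ on $[-w,w]$, $V(\pm w)=0$ and $0\le V\le4wS_m$ (built from the Green function).
\item It\^o's formula then gives $\E[\tau-t_0]\le4wS_m$ for the exit time $\tau$ from $(-w,w)$.
\item With $w=h/(8S_m)$, Markov's inequality and optional sampling for the submartingale $|Y|$ yield $\E|Y_{t_1}|\ge w/2$.
\end{enumerate}
Applied on $[t/2,t]$ with $m=\varepsilon(t/2)^\gamma$, this gives
$\kappa\E|Y_t|\ge\kappa C_1^{1/\alpha}(16J_\alpha2^{\gamma})^{-1}\varepsilon^{\beta}t^{\beta\gamma+1}\ge\varepsilon t^\gamma$
for $\varepsilon\le\varepsilon_0$, because $\beta\gamma+1=\gamma$ and $\beta<1$. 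This self-consistency is where $\gamma=\alpha/(1-\alpha)$ comes from. The uniformity in the starting value is what allows restarting at $t/2$. With this lemma your $\mathcal K$ would also be invariant, with adjusted constants.

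\textbf{Two smaller points.} First, your fixed point lives only on $[0,T_0]$, while (iii) asks for a strong solution on $[0,T]$. The paper extends it in three steps:
\begin{enumerate}
\item It takes weak existence on $[T_0,T]$ from $\nu_{T_0}\in\cP_2(\R)$ (Remark~\ref{prop:existence}).
\item It freezes that law flow.
\item It uses pathwise uniqueness and Yamada--Watanabe for the resulting ordinary SDE, driven by the same $B$.
\end{enumerate}
Second, the factor $1/L_F$ in $c_0$ comes from $\|\nu_t\|_1\ge F(\nu_t)/L_F$, with the fixed point taken on $r=F(\nu_\cdot)$. It does not come from moment controls.
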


In particular, $X\equiv0$ and $Y^{*}$ have different laws on $[0,T_0]$, so uniqueness in law fails for the McKean--Vlasov SDE~\eqref{eq:MV} with coefficients $b,\sigma$ given in~\eqref{eq:McKV_counterexample}. Since $X$ is trivial, the solutions can be constructed as strong solutions driven by the same Brownian motion. Hence, Theorem~\ref{thm:main} directly implies that pathwise uniqueness fails for McKean--Vlasov SDE~\eqref{eq:MV} with coefficients $b,\sigma$ given in~\eqref{eq:McKV_counterexample} without invoking Yamada--Watanabe theorem. 

\smallskip

We conclude the section with some exemplary diffusion coefficients satisfying the assumptions of Theorem~\ref{thm:main}.

\begin{example}
  The assumptions of Theorem~\ref{thm:main} are satisfied by a broad class of functionals on $\mathcal{P}_1(\mathbb{R})$. The simplest example is
  \begin{equation*}
    F(\mu) := a\int_{\mathbb{R}} |x|\,\mu(\d x) = a \|\mu\|_1, \qquad a>0.
  \end{equation*}
  More generally, let $g\colon \mathbb{R}\to[0,\infty)$ be Lipschitz continuous with $g(0)=0$ and
  \begin{equation*}
    g(x)\geq \kappa |x|, \qquad x\in\mathbb{R},
  \end{equation*}
  for some $\kappa>0$. Then
  \begin{equation*}
    F_g(\mu) := \int_{\mathbb{R}} g(x)\,\mu(\d x)
  \end{equation*}
  satisfies the assumptions of Theorem~\ref{thm:main}. Indeed,
  \begin{equation*}
    |F_g(\mu)-F_g(\nu)| \leq \operatorname{Lip}(g) W_1(\mu,\nu)
  \end{equation*}
  and
  \begin{equation*}
    F_g(\mu) \geq \kappa \int_{\mathbb{R}} |x|\,\mu(\d x) = \kappa \|\mu\|_1.
  \end{equation*}
  A more general class is obtained by setting
  \begin{equation*}
    F(\mu) := a W_1(\mu,\delta_0) + G(\mu),
  \end{equation*}
  where $a>0$ and $G\colon\mathcal{P}_1(\mathbb{R})\to[0,\infty)$ is $W_1$-Lipschitz continuous with $G(\delta_0)=0$. Then
  \begin{equation*}
    F(\mu)\geq a\|\mu\|_1,
  \end{equation*}
  and $F$ is $W_1$-Lipschitz continuous with Lipschitz constant at most $a+\operatorname{Lip}(G)$. For instance, choosing
  \begin{equation*}
    G(\mu) := \lambda  \left| \int_{\mathbb{R}} x\,\mu(\d x) \right|, \qquad \lambda\geq 0,
  \end{equation*}
  gives
  \begin{equation*}
    F(\mu) = a\int_{\mathbb{R}} |x|\,\mu(\d x) + \lambda \left| \int_{\mathbb{R}} x\,\mu(\d x) \right|.
  \end{equation*}
\end{example}

\section{Construction of a nontrivial solution}\label{sec:proof}

In this section, we prove Theorem~\ref{thm:main}, with the main effort devoted to the construction of the nontrivial solution asserted in Theorem~\ref{thm:main}~(iii). The key idea is to obtain such a solution through a fixed-point argument for a family of one-dimensional SDEs in which the law dependence of the diffusion coefficient is replaced by a fixed law. The main probabilistic ingredient is a quantitative spreading estimate, showing that solutions of the resulting ordinary stochastic differential equations develop a strictly positive first moment, uniformly with respect to the initial condition; see Subsection~\ref{subsec:spreading}. This estimate is combined with an a priori upper bound to construct an invariant set for the associated fixed-point map. Schauder's fixed-point theorem then yields a nontrivial solution of the McKean--Vlasov SDE; see Subsection~\ref{subsec:fixedpoint}. The proof of Theorem~\ref{thm:main} is completed in Subsection~\ref{subsec: proof of main result}. We first introduce the notation and parameters used throughout this section.

\subsection{Setting and notation}

Throughout, for $\alpha\in(\tfrac12,1)$ we write
\begin{equation*}
  \beta:=2-\frac1\alpha\in(0,1),\qquad
  \gamma:=\frac{1}{1-\beta}=\frac{\alpha}{1-\alpha}\in(1,\infty),\qquad
  J_\alpha:=\int_0^\infty\frac{1}{(1+\eta^{\alpha})^{2}}\dd\eta<\infty ,
\end{equation*}
the finiteness of $J_\alpha$ being exactly the condition $\alpha>\frac12$. The quantities blow up or degenerate as $\alpha\downarrow\frac12$ ($\beta\downarrow0$, $J_\alpha\uparrow\infty$) and as $\alpha\uparrow1$ ($\gamma\uparrow\infty$). Note for later reference the elementary identity
\begin{equation}\label{eq:gamma-identity}
  \gamma\beta+1=\gamma.
\end{equation}

\smallskip

For $E\subset \R^m$, $m \in \N$, and $A \subset \R$, let $C(E;A)$ denote the space of continuous functions from $E$ to $A$. We write $C(E):=C(E; \R)$. Moreover, we denote the subspace of bounded continuous functions by $C_b(E) \subset C(E)$ and the subspace of twice continuously differentiable functions with compact support by $C_0^2(\R)\subset C(\R)$.
 
\subsection{A quantitative spreading estimate}\label{subsec:spreading}

We begin with the key probabilistic estimate: a uniformly positive additive contribution to the diffusion coefficient forces the solution to spread away from the origin at a quantitative rate, independently of its initial position.
 
\begin{lemma}\label{lem:spreading}
  Let $\alpha\in(\tfrac12,1]$, $C_1>0$, $m>0$, and set $\bar a(y):=C_1|y|^{\alpha}+m$ and $S_m:=\int_0^\infty\bar a(z)^{-2}\dd z$. Let $0\le t_0<t_1\le T$, $h:=t_1-t_0$, and let $(Y_s)_{s\in[t_0,t_1]}$ be a continuous adapted process with
  \begin{equation*} 
    Y_s=Y_{t_0}+\int_{t_0}^{s}H_u\,\dd B_u,\qquad s\in[t_0,t_1],
  \end{equation*}
  for some progressively measurable $H$ with 
  \begin{equation}\label{eq:integrability_a}
    \E\Bigg[\int_{t_0}^{t_1}H_u^2 \,\dd u \Bigg]+ \E \big[ Y_{t_0}^2 \big] <\infty
  \end{equation}
  and
  \begin{equation}\label{eq:domination}
    |H_u|\;\ge\;\bar a(Y_u)\qquad\text{for a.e.\ }u\in[t_0,t_1],\ \text{a.s.}
  \end{equation}
  Then, $S_m = J_\alpha C_1^{-1/\alpha}m^{-\beta}$ and whatever the possibly random initial value $Y_{t_0}$,
  \begin{equation*}
    \E[|Y_{t_1}|]\;\ge\;\frac{h}{16\,S_m}\;=\;\frac{C_1^{1/\alpha}}{16\,J_\alpha}\,m^{\beta}\,h .
  \end{equation*}
\end{lemma}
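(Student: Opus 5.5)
The plan is to use a scale-function style Lyapunov argument: build a convex function $\Phi$ with $\Phi''=2\bar a^{-2}$, so that Itô's formula turns the domination \eqref{eq:domination} into a linear-in-time lower bound for $\E[\Phi(Y_{t_1})]$. Since $\Phi$ grows at most linearly, this transfers to $\E[|Y_{t_1}|]$.

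\emph{Identity for $S_m$.} Substitute $z=(m/C_1)^{1/\alpha}\eta$ in $\int_0^\infty (C_1 z^\alpha+m)^{-2}\dd z$. This gives
\begin{equation*}
S_m = m^{-2}(m/C_1)^{1/\alpha}J_\alpha = J_\alpha C_1^{-1/\alpha} m^{1/\alpha-2}=J_\alpha C_1^{-1/\alpha}m^{-\beta}.
\end{equation*}

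\emph{The test function.} Define the even function
\begin{equation*}
\Phi(y):=\int_0^{|y|}\int_0^{z}\frac{2}{\bar a(w)^2}\dd w\dd z .
\end{equation*}
It is $C^2$ on $\R$, since $\bar a$ is continuous and even, and $\Phi'(0)=0$. It has the following properties:
\begin{itemize}
\item $\Phi''=2\bar a^{-2}$ and $\Phi\geq 0$;
\item $|\Phi'|\le 2S_m$, hence $\Phi(y)\le 2S_m|y|$;
\item $\Phi$ is convex, and $\Phi'$ and $\Phi''$ are bounded.
\end{itemize}
Apply Itô's formula on $[t_0,t_1]$. The stochastic integral $\int\Phi'(Y_u)H_u\dd B_u$ is a true martingale, because $\Phi'$ is bounded and $\E\int H^2<\infty$ by \eqref{eq:integrability_a}. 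Using \eqref{eq:domination},
\begin{equation*}
\E[\Phi(Y_{t_1})]=\E[\Phi(Y_{t_0})]+\E\int_{t_0}^{t_1}\frac{H_u^2}{\bar a(Y_u)^2}\dd u\ \ge\ \E[\Phi(Y_{t_0})]+h\ \ge\ h .
\end{equation*}
Combined with $\Phi(y)\le 2S_m|y|$, this already yields $\E|Y_{t_1}|\ge h/(2S_m)$. This is better than the claimed $h/(16 S_m)$, so the constant $16$ leaves room.

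\emph{Main obstacle.} The bound $\Phi(y)\le 2S_m|y|$ is the step to double-check. It holds because $\Phi'(z)=\int_0^z 2\bar a^{-2}\le 2S_m$ for $z\ge0$, and $\Phi$ is even. The other technical point is integrability: $\Phi(Y_{t_0})$ is integrable since $\E Y_{t_0}^2<\infty$, and no localization is needed because $\Phi'$ is bounded. The Itô formula applies directly since $\Phi\in C^2$. If the authors instead used a truncated or stopping-time argument, as the factor $16$ suggests, the weaker constant would follow equally. So I would present the direct argument and note that it implies the stated inequality, since $h/(2S_m)\ge h/(16S_m)$. The second equality in the claim then follows from the formula for $S_m$.
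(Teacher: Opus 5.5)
Your proof is correct, and it takes a genuinely different and shorter route than the paper.

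\textbf{The paper's route.} It argues locally. With $w=h/(8S_m)$ it uses the Dirichlet Green potential $V$ on $(-w,w)$, which satisfies $V''=-2\bar a^{-2}$ and $0\le V\le 4wS_m$. This bounds the expected exit time by $\E[\tau-t_0]\le 4wS_m=h/2$. Markov's inequality then gives $\PP(|Y_\tau|\ge w)\ge\frac12$. Optional sampling for the submartingale $|Y|$ yields $\E[|Y_{t_1}|]\ge w/2=h/(16S_m)$. The constant $16$ is the product of the losses in these three steps.

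\textbf{Your route.} You use a whole-line potential of the opposite sign, $\Phi''=+2\bar a^{-2}$ with $\Phi(0)=\Phi'(0)=0$, so that $\Phi(Y_s)-(s-t_0)$ is a submartingale. The bound $|\Phi'|\le 2S_m$, hence $\Phi(y)\le 2S_m|y|$, then converts this directly into $\E[|Y_{t_1}|]\ge h/(2S_m)$. This is where $S_m<\infty$, i.e.\ $\alpha>\frac12$, is used. There is no stopping time, no Markov inequality and no optional sampling. The constant is better by a factor $8$, which would give a correspondingly larger $\varepsilon_0$ in Proposition~\ref{prop:fixedpoint}.

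\textbf{What the paper's version buys.} Its advantage is locality. It uses \eqref{eq:domination} only up to the exit time $\tau$, i.e.\ while $|Y_u|<w$. So it would still work if the lower bound on $|H|$ were known only near the origin. Your argument needs the domination on all of $[t_0,t_1]$, which is exactly what the lemma assumes.
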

 
\begin{proof}
  \emph{Step 0 (speed measure).} We derive the formula for the \(S_m\). For $z=(m/C_1)^{1/\alpha}\eta$, we have
  \begin{equation*}
    \int_0^\infty\frac{1}{(C_1z^{\alpha}+m)^{2}}\dd z
    =\Bigl(\frac{m}{C_1}\Bigr)^{1/\alpha}\frac1{m^{2}}\int_0^\infty\frac{1}{(1+\eta^{\alpha})^{2}} \dd\eta
    =J_\alpha\,C_1^{-1/\alpha}\,m^{1/\alpha-2},
  \end{equation*}
  and $1/\alpha-2=-\beta$. The integral converges at infinity because $2\alpha>1$.
 
  \emph{Step 1 (exit-time bound).} We set
  \begin{equation*}
    w:=\frac{h}{8S_m},\qquad
    \tau:=\inf\{s\ge t_0:\ |Y_s|\ge w\}\wedge t_1 \quad(\tau= t_0\text{ if }|Y_{t_0}|\ge w).
  \end{equation*}
  and define, for $y\in[-w,w]$,
  \begin{equation*}
    V(y):=\int_{-w}^{w}G(y,z)\,\frac{2}{\bar a(z)^{2}}\,\dd z,
    \qquad G(y,z):=\frac{(w+(y\wedge z))(w-(y\vee z))}{2w},
  \end{equation*}
  Note that $G$ is the Green function for $-\d^2/\d y^2$ on $(-w,w)$ with Dirichlet boundary conditions. Consequently $V$ satisfies \(V(w) = V(-w) =0\) and 
  \begin{equation}\label{eq:ode}
    V''(y) = -\frac{2}{\bar a^2(y) }, \qquad y \in (-w,w).
  \end{equation}
  Then, $V\in C^{2}([-w,w])$ with $V''(y) \le 0$ and $V\ge0$, and $V'$ is bounded ($|\partial_yG|\le1$ gives $|V'|\le\int_{-w}^{w}2\bar a(y)^{-2} \dd y \le4S_m$), and, using $G(y,z)\le w-|z|$ (for $z \ge 0$, $z\ge y$, we have $G=(w+y)(w-z)/(2w)\le w-z$; symmetrically for $z\le y$ and similar for \(z \le 0\)), we find
  \begin{equation}\label{eq:Vmax}
    0\le V(y)\le\int_{-w}^{w}(w-|z|)\frac{2}{\bar a(z)^2}\dd z \le 2w\int_{-w}^{w}\frac{1}{\bar a(z)^2} \dd z \le 4wS_m .
  \end{equation}
  In the event $\{|Y_{t_0}|<w\}$, we utilize It\^o's formula on $[t_0,\tau]$ to obtain 
  \begin{equation*}
    V(Y_{\tau})=V(Y_{t_0})+\int_{t_0}^{\tau}V'(Y_u)H_u\dd B_u
    +\frac12\int_{t_0}^{\tau}H_u^{2}\,V''(Y_u)\,\dd u.
  \end{equation*}
  Now, since \(V\) solves the ODE~\eqref{eq:ode} and by \eqref{eq:domination}, we have
  \begin{equation*} 
    \frac12 H_u^{2}V''(Y_u)=-\Bigl(\frac{H_u}{\bar a(Y_u)}\Bigr)^{2}\le-1
    \qquad\text{for a.e.\ }u \in [t_0,\tau] .
  \end{equation*}
  Using this and the first inequality in \eqref{eq:Vmax} we obtain on the event $\{|Y_{t_0}|<w\}$, 
  \begin{equation*}
    \tau - t_0 \leq V(Y_{t_0})+\int_{t_0}^{\tau}V'(Y_u)H_u\dd B_u.
  \end{equation*}
  By $V'$ being bounded and \eqref{eq:integrability_a} the stochastic integral is a true martingale, so taking expectations we get:
  \begin{equation*}
    \E\bigl[(\tau-t_0)\mathbf 1_{\{|Y_{t_0}|<w\}}\bigr]\le\E[V(Y_{t_0}) \mathbf 1_{\{|Y_{t_0}|<w\}}]\le4wS_m .
  \end{equation*}
  On $\{|Y_{t_0}|\ge w\}$, $\tau-t_0=0$. 
 
  Hence $\E[\tau-t_0]\le4wS_m$, and by Markov's inequality and the choice of $w$,
  \begin{equation*}
    \PP\bigl(\{\tau=t_1 \}\cap \{ |Y_{\tau}|<w\}\bigr)\le\PP(\tau-t_0\ge h)\le\frac{4wS_m}{h}=\frac12 .
  \end{equation*}
 
  \emph{Step 2 (conclusion).} By \eqref{eq:integrability_a} $Y$ is a true martingale on $[t_0,t_1]$, so $|Y|$ is a submartingale; by Doob's optional sampling theorem with the bounded stopping times $\tau\le t_1$,
  \begin{equation*} 
    \E[|Y_{t_1}|]\ \ge\ \E[|Y_{\tau}|]\\ \ge\ \E\bigl[|Y_\tau|\mathbf 1_{\{|Y_\tau|\ge w \} }\bigr]
    \ \ge\ w\,\PP\bigl(|Y_\tau|\ge w\bigr)\ \ge\ \frac w2=\frac{h}{16S_m},
  \end{equation*}
  where the last inequality follows from 
  \begin{equation*}
    \{\tau=t_1 \} \cap \{ |Y_{\tau}| < w\} \subset  \{ |Y_{\tau}| <  w\}
  \end{equation*}
  since, on $\{t_0 < \tau< t_1\}$ we have by continuity \(|Y_\tau| = w \). For $\tau=t_0$ the inclusion is clear.
\end{proof}

\subsection{A fixed-point construction}\label{subsec:fixedpoint}

Throughout this section fix $\alpha\in(\frac12,1)$, $C_1>0$. Moreover, define $\Lambda := \max\{1,L_F\}$ and the constants
\begin{align}\label{eq:constants}
  &M:=\max\Bigl(1,\;\bigl(8C_1^{2}\bigr)^{\frac1{1-\alpha}}\Bigr),\qquad
  T_0:=\min\Bigl(T,\;\tfrac{1}{4\Lambda^2}\Bigr),\\
  &\varepsilon_0:=\Bigl(\frac{\kappa C_1^{1/\alpha}}{16\,J_\alpha\,2^{\gamma}}\Bigr)^{\gamma},
  \qquad
  \varepsilon:=\min(\varepsilon_0,1).\label{eq:eps0}
\end{align}

For fixed $r\in C([0,T_0];\;[0,\infty))$, we consider the solution $Y^{(r)}$ of the \emph{frozen} SDE
\begin{equation}\label{eq:frozen}
  \dd Y_t=\bigl(C_1|Y_t|^{\alpha}+r(t)\bigr)\dd B_t,\qquad Y_0=0 .
\end{equation}
Existence of a strong solution and pathwise uniqueness of the SDE~\eqref{eq:frozen} are granted in this frozen setting by classical results about ordinary SDEs, see e.g. \cite[Chapter~IX, Theorem~3.5]{Revuz1999} and \cite[Theorem~2.3]{Promel2023}, since the coefficient $a_r(t,y):=C_1|y|^{\alpha}+r(t)$ is continuous, of linear growth ($|y|^\alpha\le1+|y|$), and $\alpha$-H\"older continuous in $y$ uniformly in $t$. Hence, we can define
\begin{equation*}
  \Phi(r)(t):= F(\cL(Y^{(r)}_t)),\qquad t\in[0,T_0].
\end{equation*}
 
\begin{lemma}\label{lem:envelope}
  Let $r\in C([0,T_0];\;[0,\infty))$ and \(Y^{(r)}\) be the strong solution of~\eqref{eq:frozen}. Let \(r\) satisfy $r(t)\le \Lambda \sqrt{Mt}$ for $t \in [0,T_0]$. 
   Then the following hold, with constants independent of $r$:
  \begin{enumerate}[label=\textup{(\roman*)}]
    \item $Y$ is a square-integrable martingale on $[0,T_0]$ and
          $\E[Y_t^2]\le Mt$ for all $t\in[0,T_0]$. In particular $\E[|Y_t|]\le\sqrt{Mt}$.
    \item $\sup_{t\in[0,T_0]}\E[Y_t^4]\le C_4
          :=\tfrac38\bigl(C_1^2+\tfrac M4\bigr)M\,e^{3C_1^2}$,
          which depends only on $\alpha$ and $C_1$.
    \item $0\le\Phi(r)(t)\le L_F\sqrt{Mt}\le\Lambda\sqrt{Mt}$ for all $t\in[0,T_0]$.
  \end{enumerate}
\end{lemma}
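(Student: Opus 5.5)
We prove (i) by applying It\^o's formula to $Y_t^2$ and closing a nonlinear integral inequality for $f(t):=\E[Y_t^2]$ through a comparison with the candidate envelope $Mt$. Part (ii) follows from a similar computation for $Y_t^4$ together with Gronwall's lemma. Part (iii) is then immediate from the Lipschitz property of $F$ and (i). Throughout we write $Y=Y^{(r)}$ and $a_r(t,y)=C_1|y|^\alpha+r(t)$.

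\emph{Step 1 (integrability).} Since $a_r$ has linear growth and $r$ is bounded on $[0,T_0]$, the standard localization and Gronwall argument for ordinary SDEs gives $\sup_{t\le T_0}\E[Y_t^4]<\infty$. Hence $\E\int_0^{T_0}a_r(s,Y_s)^2\dd s<\infty$, so $Y$ is a square-integrable martingale. The same argument gives the crude bound $f(t)\le C't$ for small $t$, for some constant $C'$ depending on $r$. This crude bound is only used to start the comparison argument and does not enter the final constants.

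\emph{Step 2 (second moment).} By It\^o's isometry, $(x+y)^2\le 2x^2+2y^2$, Jensen's inequality $\E|Y_s|^{2\alpha}\le f(s)^\alpha$, and $r(s)^2\le\Lambda^2Ms$, we get
\begin{equation*}
  f(t)\le 2C_1^2\int_0^t f(s)^\alpha\dd s+\Lambda^2Mt^2\le 2C_1^2\int_0^t f(s)^\alpha\dd s+\tfrac{M t}{4},
\end{equation*}
where the last step uses $t\le T_0\le 1/(4\Lambda^2)$. Next we check that the envelope is strictly supersolving. If $f(s)\le Ms$ on $[0,t]$, then the right-hand side is at most
\begin{equation*}
  \tfrac{2C_1^2M^\alpha t^{1+\alpha}}{1+\alpha}+\tfrac{Mt}{4}\le Mt\Bigl(\tfrac{2C_1^2t^\alpha}{M^{1-\alpha}}+\tfrac14\Bigr)\le \tfrac{Mt}{2}.
\end{equation*}
This uses $t\le1$ and $M^{1-\alpha}\ge 8C_1^2$. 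We then run a continuity argument. Set $t^*:=\sup\{t\le T_0: f(s)\le Ms\ \forall s\le t\}$; the crude bound $f\le C't$, fed once or twice through the inequality, shows $t^*>0$. If $t^*<T_0$, the display gives $f(t^*)\le Mt^*/2$. By continuity of $f$ (which follows from Step 1), the bound $f(s)\le Ms$ then persists slightly beyond $t^*$, a contradiction. Hence $f(t)\le Mt$ on $[0,T_0]$, and Cauchy--Schwarz gives $\E|Y_t|\le\sqrt{Mt}$.

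\emph{Step 3 (fourth moment).} It\^o's formula and the martingale property from Step 1 give $\E[Y_t^4]=6\int_0^t\E[Y_s^2a_r(s,Y_s)^2]\dd s$. We bound $a_r^2\le 2C_1^2|Y_s|^{2\alpha}+2r(s)^2$ and use $|y|^{2\alpha}\le 1+y^2$. Then $r(s)^2\le\Lambda^2Ms\le M/4$, again by $T_0\le 1/(4\Lambda^2)$. Combining these with $\E Y_s^2\le Ms$ from Step 2 yields a linear integral inequality of the form $g(t)\le c_1 M\int_0^t s\dd s+c_2C_1^2\int_0^t g(s)\dd s$ for $g(t):=\E[Y_t^4]$. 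Gronwall's lemma on $[0,T_0]\subset[0,1]$ then gives a bound of the stated shape $\tfrac38(C_1^2+\tfrac M4)Me^{3C_1^2}$. Matching the exact constant is bookkeeping with the factors in the elementary inequalities, which we may need to sharpen (for instance by using $(x+y)^2\le(1+\theta)x^2+(1+\theta^{-1})y^2$); the resulting bound depends only on $\alpha$ and $C_1$.

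\emph{Step 4 (part (iii)).} Since $F\ge0$ and $F(\delta_0)=0$, we have $0\le\Phi(r)(t)=F(\cL(Y_t))-F(\delta_0)\le L_F\W_1(\cL(Y_t),\delta_0)=L_F\E|Y_t|$. By Step 2 this is at most $L_F\sqrt{Mt}\le\Lambda\sqrt{Mt}$.

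The main obstacle is Step 2, namely making the comparison rigorous despite the non-Lipschitz term $f^\alpha$. The integral inequality alone does not force $f\le Mt$ near $t=0$, since $f^\alpha$ is not Lipschitz there. This is why we use the crude linear bound from Step 1 to get $t^*>0$, and the strict factor $\tfrac12$ to push past $t^*$.
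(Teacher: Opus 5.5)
Your proof is correct and follows essentially the same route as the paper: the same integral inequality $f(t)\le\int_0^t\bigl(2C_1^2f(s)^\alpha+2\Lambda^2Ms\bigr)\dd s$, the same strict supersolution bound $f(t)\le \tfrac{M}{2}t$ whenever $f(s)\le Ms$ on $[0,t]$, and the same continuity argument. The only difference is how the argument starts at $t=0$: the paper makes the integrand at most $M$ via continuity of $f$, while you use the crude bound $f(t)\le t\sup_{s}\E[a_r(s,Y_s)^2]$; the two are equivalent. For (ii) no sharpening is needed: your own inequalities give $\E[Y_t^4]\le 3\bigl(2C_1^2+\tfrac{M}{2}\bigr)M t^2 e^{12C_1^2 t}$, and using $t\le T_0\le\tfrac14$ instead of $t\le 1$ yields exactly $C_4$ (justify the It\^o step for $Y^4$ by localisation and Fatou, since fourth moments alone do not make $\int Y^3a_r\dd B$ a true martingale).
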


\begin{proof}
By linear growth and a standard localised Gr\"onwall bound, $\E[\int_0^{T_0}a_r(t,Y_t)^2\dd t]<\infty$. Hence, $Y$ is a true $L^2$-martingale and $\varphi(t):= \E[Y_t^{(r)}]=\int_0^t\E\bigl[a_r(s,Y_s)^2\bigr]\dd s$ is continuous, $\varphi(0)=0$. Using $(x+y)^2\le2x^2+2y^2$, Jensen ($z\mapsto z^{\alpha}$ concave), and $r(s)^2\le \Lambda^2 Ms$ we find 
  \begin{equation}\label{eq:phi-int}
    \varphi(t) = \int\limits_0^t \E \bigl [ (C_1 |Y_s^{(r)}|^\alpha + r(s))^2 \bigr] \dd s \le\int_0^t (2C_1^{2}\varphi(s)^{\alpha}+2 \Lambda^2 Ms)\dd s .
  \end{equation}
  Let \(A:=\bigl\{t\in[0,T_0]\colon \varphi(s)\le Ms \text{ for all } s\in[0,t]\bigr\}.\)
  We show that $A$ is nonempty, relatively closed and relatively open in $[0,T_0]$; since $[0,T_0]$ is connected, this gives $A=[0,T_0]$, which is the claim. Clearly $0\in A$.

  \emph{Key estimate.} Let $t\in A$. Inserting $\varphi(s)\le Ms$ for $s\le t$ into~\eqref{eq:phi-int} yields
  \begin{equation} \label{eq: key}
    \varphi(t)\le 2C_1^{2}M^{\alpha}\frac{t^{1+\alpha}}{1+\alpha}+\Lambda^2 Mt^{2}
    \le t\bigl(2C_1^{2}M^{\alpha}T_0^{\alpha}+\Lambda^2 M T_0\bigr)
    \le \frac{M}{2}\,t .
  \end{equation}
  For the last inequality, note that $T_0\le\frac1{4\Lambda^2}\le\frac14$ by~\eqref{eq:constants}. Hence $\Lambda^2MT_0\le\frac M4$. Moreover, $M^{1-\alpha}\ge 8C_1^2\ge 8C_1^2T_0^\alpha$, i.e.\ $2C_1^2M^\alpha T_0^\alpha\le\frac M4$.

  \emph{$A$ is closed.} Since $\varphi$ is continuous, $A=\{t\in[0,T_0]\colon \sup_{s\in[0,t]}(\varphi(s)-Ms)\le 0\}$ is the sublevel set of a continuous function of $t$.

  \emph{$A$ is open at every $t\in A\setminus\{0\}$.} By estimate~\eqref{eq: key}, $\varphi(t)\le\frac M2t<Mt$. By continuity of $\varphi$ there is $\delta>0$ with $\varphi(s)<Ms$ for all $s\in[t,t+\delta)\cap[0,T_0]$. Since also $t\in A$, we get $[0,t+\delta)\cap[0,T_0]\subseteq A$.

  \emph{$A$ is open at $0$.} Here the key estimate gives nothing, since $\varphi(0)=0=M\cdot0$. Instead we use $\varphi(0)=0$ and the continuity of $\varphi$ to choose $\delta>0$ such that $2C_1^2\varphi(s)^\alpha+2\Lambda^2Ms\le M$ for all $s\in[0,\delta]$.
  Then~\eqref{eq:phi-int} gives $\varphi(t)\le Mt$ for all $t\in[0,\delta]$, i.e.\ $[0,\delta]\subseteq A$. This proves (i). 
  The claim (ii) follows by linear growth and a Gr\"onwall argument. Furthermore, by the Lipschitz property of \(F\) we find $\Phi(r)(t) \leq L_F \sqrt{Mt} \leq \Lambda \sqrt{Mt}$, which demonstrates (iii).  
\end{proof}

Define $K:=\bigl\{r\in C([0,T_0])\colon \varepsilon t^{\gamma}\le r(t)\le \Lambda \sqrt{Mt} \ \text{ for all } t\in[0,T_0]\bigr\}.$

\begin{proposition}[The fixed point]\label{prop:fixedpoint}
  There exists $r^{*}\in K$ with $\Phi(r^{*})=r^{*}$.
\end{proposition}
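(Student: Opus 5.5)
We apply Schauder's fixed-point theorem to $\Phi$ on $K$, viewed as a subset of the Banach space $C([0,T_0])$ with the supremum norm. The set $K$ is nonempty: $r(t)=\Lambda\sqrt{Mt}$ belongs to it, since $\varepsilon t^\gamma\le t\le \Lambda\sqrt{Mt}$ for $t\le T_0\le 1/4$, using $\varepsilon\le1$, $\gamma>1$, $M,\Lambda\ge1$. It is clearly closed and convex. The proof then has three steps: invariance $\Phi(K)\subset K$, continuity of $\Phi$ on $K$, and relative compactness of $\Phi(K)$.

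\emph{Invariance.} The upper bound $\Phi(r)(t)\le\Lambda\sqrt{Mt}$ is Lemma~\ref{lem:envelope}(iii). Continuity of $t\mapsto\Phi(r)(t)$ follows from $\W_1(\cL(Y_t),\cL(Y_s))\le \E|Y_t-Y_s|$, the Lipschitz property of $F$, and the $L^2$-bound on $a_r(\cdot,Y)$. For the lower bound we use $\Phi(r)(t)\ge\kappa\E|Y^{(r)}_t|$ and Lemma~\ref{lem:spreading} on $[t/2,t]$ with $m:=\varepsilon (t/2)^\gamma$. This is legitimate because $r(u)\ge\varepsilon u^\gamma\ge m$ for $u\in[t/2,t]$, so $|H_u|=C_1|Y_u|^\alpha+r(u)\ge \bar a(Y_u)$, and the integrability~\eqref{eq:integrability_a} holds by Lemma~\ref{lem:envelope}(i). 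This gives
\[
\E|Y_t|\ge \frac{C_1^{1/\alpha}}{16J_\alpha}\,\varepsilon^\beta 2^{-\gamma\beta}t^{\gamma\beta}\,\frac t2
=\frac{C_1^{1/\alpha}}{16J_\alpha 2^{\gamma}}\,\varepsilon^{\beta}t^{\gamma},
\]
using~\eqref{eq:gamma-identity}. We need $\kappa\cdot\frac{C_1^{1/\alpha}}{16J_\alpha2^\gamma}\varepsilon^\beta\ge\varepsilon$, i.e.\ $\varepsilon^{1-\beta}\le \kappa C_1^{1/\alpha}/(16J_\alpha2^\gamma)$. Since $1-\beta=1/\gamma$, this is exactly $\varepsilon\le\varepsilon_0$ by~\eqref{eq:eps0}. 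The step is the heart of the argument, but the constants are designed for it.

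\emph{Continuity.} For $r_n\to r$ uniformly in $K$, we drive all frozen SDEs by the same $B$ and estimate $\sup_t\E|Y^{(r_n)}_t-Y^{(r)}_t|$. A direct Gronwall argument fails because of the $\alpha$-Hölder term, so we use the Yamada--Watanabe device. Take the standard functions $\psi_k\approx|x|$ with $\psi_k''(x)\le 2/(k|x|^{2\alpha})$ supported on $\{a_k\le |x|\le a_{k-1}\}$, where $\int_{a_k}^{a_{k-1}}x^{-2\alpha}\dd x=k$; this is possible since $2\alpha\ge1$. Applying Itô to $\psi_k(Y^n-Y)$ and using $(C_1||y|^\alpha-|x|^\alpha|+|r_n-r|)^2\le 2C_1^2|x-y|^{2\alpha}+2|r_n-r|^2$, the second-order term is bounded by $C/k+C\,k\|r_n-r\|_\infty^2a_k^{-2\alpha}$. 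Letting $n\to\infty$ and then $k\to\infty$ gives $\sup_t\E|Y^n_t-Y_t|\to0$. Hence $\sup_t|\Phi(r_n)(t)-\Phi(r)(t)|\le L_F\sup_t\E|Y^n_t-Y_t|\to0$. Alternatively, one could argue by tightness plus uniqueness in law of the frozen SDE.

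\emph{Compactness.} For $r\in K$ and $s\le t$, Itô's isometry gives
\[
|\Phi(r)(t)-\Phi(r)(s)|\le L_F\big(\E\textstyle\int_s^t a_r(u,Y_u)^2\dd u\big)^{1/2}.
\]
The integrand is bounded by $2C_1^2\E|Y_u|^{2\alpha}+2\Lambda^2MT_0\le C$, uniformly in $r\in K$, by Lemma~\ref{lem:envelope}(i). So $\Phi(K)$ is uniformly bounded and uniformly $1/2$-Hölder, and Arzelà--Ascoli gives relative compactness. Schauder's theorem then yields $r^*\in K$ with $\Phi(r^*)=r^*$. The main technical obstacle is the continuity step, which needs the Yamada--Watanabe-type stability estimate rather than a plain Lipschitz argument.
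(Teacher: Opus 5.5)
Your proposal is correct, and its skeleton is the paper's. Both run Schauder on $K\subset C([0,T_0])$. Both take the upper bound from Lemma~\ref{lem:envelope}(iii) and the lower bound from Lemma~\ref{lem:spreading} on $[t/2,t]$ with $m=\varepsilon(t/2)^\gamma$ and $\gamma\beta+1=\gamma$. Both get relative compactness from the uniform $\frac12$-H\"older bound and Arzel\`a--Ascoli. Using $\Lambda\sqrt{Mt}$ rather than $\varepsilon t^\gamma$ to show $K\neq\emptyset$ is immaterial.

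The genuine difference is the continuity of $\Phi$.

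\emph{The paper's route.} It argues softly:
\begin{enumerate}
\item fourth moments from Lemma~\ref{lem:envelope}(ii) give tightness of $\{\cL(Y^{(r_n)})\}$ via Kolmogorov;
\item Prokhorov and Skorokhod produce limit points, which are identified through the martingale problem;
\item uniqueness in law of the frozen SDE (pathwise uniqueness plus Yamada--Watanabe) pins down the limit;
\item uniform integrability gives $\W_1$-convergence of the marginals pointwise in $t$;
\item equicontinuity upgrades this to uniform convergence.
\end{enumerate}

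\emph{Your route.} You couple all frozen solutions on the given space through the common $B$ and run the Yamada--Watanabe argument on $D=Y^{(r_n)}-Y^{(r)}$. This gives the explicit bound
\[
\sup_{t\in[0,T_0]}\E\bigl|Y^{(r_n)}_t-Y^{(r)}_t\bigr|\le a_{k-1}+\frac{2C_1^2T_0}{k}+\frac{2T_0\,\|r_n-r\|_\infty^2}{k\,a_k^{2\alpha}} .
\]
Your factor $k$ in place of $1/k$ in the last term is a cruder but still valid bound. Since $\W_1(\cL(Y^{(r_n)}_t),\cL(Y^{(r)}_t))\le\E|D_t|$, this yields $\sup_t|\Phi(r_n)(t)-\Phi(r)(t)|\to0$ directly.

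\emph{What each route buys.} Yours is shorter and quantitative. It needs neither the fourth-moment bound nor any weak-convergence machinery, nor the equicontinuity upgrade. The price is that it relies on strong solutions living on one space with the same $B$, and on the pathwise mechanism $2\alpha\ge1$ at the level of the stability estimate itself. The paper's argument uses uniqueness in law of the frozen equation only as a black box. It would therefore carry over to settings where only weak existence and uniqueness in law are available.
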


\begin{proof}
  We verify the hypotheses of Schauder's fixed-point theorem, see, e.g. \cite[Theorem~8.8]{Deimling1985}. First, $K$ is nonempty. Notice, that $t\mapsto\varepsilon t^{\gamma}$ lies in $K$: $\varepsilon t^{\gamma}\le\varepsilon t^{1/2}\le\sqrt{Mt}$ on $[0,T_0]$ since $\gamma>\frac12$, $T_0\le1$, $\varepsilon\le1\le\sqrt M$. Convexity, closedness, and boundedness are clear (pointwise two-sided constraints, continuous bounds).

  Next we verify that $\Phi$ maps $K$ into $K$:
 
  \emph{Upper bound.} For $r\in K$, Lemma~\ref{lem:envelope} gives $\Phi(r)(t)\le \Lambda \sqrt{Mt}$.
 
  \emph{Lower bound (the barrier propagates).} Fix $t\in(0,T_0]$ and $r\in K$. For all $s \in [t/2,t]$,
  \begin{equation*}
    a_r(s,y)=C_1|y|^{\alpha}+r(s)\ \ge\ C_1|y|^{\alpha}+ \epsilon \frac{t^{\gamma}}{2^\gamma}
  \end{equation*}
  because $r(s)\ge\varepsilon s^{\gamma}\ge\varepsilon(t/2)^{\gamma}$ for $s\ge t/2$. Lemma~\ref{lem:spreading} with $H_s=a_r(s,Y^{(r)}_s)$ on $[t/2,t]$ (with $h=t/2$, arbitrary initial value $Y_{t/2}$) yields, using \eqref{eq:gamma-identity},
  \begin{align*}
    \Phi(r)(t)&= F(\cL(Y^{(r)}_t))\geq \kappa \E[|Y^{(r)}_t|] \;\ge\; \kappa\frac{C_1^{1/\alpha}}{16J_\alpha}\, \epsilon^\beta \frac{t^{\beta\gamma}}{2^{\beta\gamma}}\,\frac t2 \\
    &=\; \kappa \frac{C_1^{1/\alpha}\varepsilon^{\beta}}{16J_\alpha\,2^{\gamma\beta+1}}\;t^{\,\gamma\beta+1} \;=\;  \kappa \frac{C_1^{1/\alpha}}{16J_\alpha 2^{\gamma}}\;\varepsilon^{\beta}\,t^{\gamma} \;\ge\;\varepsilon\,t^{\gamma},
  \end{align*}
  the last inequality being exactly $\varepsilon^{1-\beta}\le \kappa C_1^{1/\alpha}/(16J_\alpha2^{\gamma})$, i.e.\ $\varepsilon\le\varepsilon_0$ by \eqref{eq:eps0} and $\frac1{1-\beta}=\gamma$.
  At $t=0$, we have $\Phi(r)(0)=0=\varepsilon\cdot0$. Hence $\Phi(K)\subseteq K$.
 
  $\Phi(K)$ is \emph{relatively compact}: For $r\in K$ and $0\le t\le t'\le T_0$,
  \begin{align*}
    \bigl|\Phi(r)(t')-\Phi(r)(t)\bigr|
    &=|F(\cL(Y^{(r)}_{t'}))-F(\cL(Y^{(r)}_t))| 
    \leq L_F \W_1(\cL(Y^{(r)}_{t'}),\cL(Y^{(r)}_t)) \\
    &\le L_F \E \bigl[ \bigl|Y^{(r)}_{t'}-Y^{(r)}_{t}\bigr| \bigr] 
    \le L_F \Bigl(\int_t^{t'}\E\bigl[ a_r(s,Y^{(r)}_s)^2 \bigr] \dd s\Bigr)^{1/2}
    \le L_F C \,|t'-t|^{1/2},
  \end{align*}
  where we used $(a+b)^2 \leq 2a^2 + 2 b^2$ and the Jensen inequality $\E[|Y_s^{(r)}|^{2 \alpha}]\leq \E[|Y_s^{(r)}|^{2}]^{ \alpha}$, the uniform bounds from Lemma~\ref{lem:envelope} and the definition of K in the last step. Uniform $\frac12$-H\"older equicontinuity plus uniform boundedness give relative compactness in $C([0,T_0])$ by the Arzel\`a--Ascoli theorem.
 
  $\Phi:K\to K$ is \emph{continuous}: Let $r_n\to r$ uniformly, $r_n,r\in K$, with solutions $Y^n:=Y^{(r_n)}$, $Y:=Y^{(r)}$. By Lemma~\ref{lem:envelope} and Burkholder--Davis--Gundy inequality,
  \begin{equation*}
    \E \bigl[ |Y^n_{t'}-Y^n_t|^{4}\bigr] \le c\,\E\bigg[ \bigg( \int_t^{t'}a_{r_n}(s,Y^n_s)^2 \dd s\bigg)^{2}\bigg] \le c\,(t'-t)\int_t^{t'}\E\bigl[a_{r_n}(s,Y^n_s)^4\bigr] \dd s\le c'(t'-t)^2
  \end{equation*}
  uniformly in $n$ (fourth moments uniformly bounded), so that $\{\cL(Y^n);\;n \in \N\}$ is tight in $C([0,T_0])$ by Kolmogorov's criterion (see \cite[Problem~2.4.11]{Karatzas1991}). By Prokhorov's theorem (\cite[Theorem~2.4.7]{Karatzas1991}) we obtain the relative compactness (\cite[Definition~2.4.6]{Karatzas1991}) of $\{\cL(Y^n);\;n \in \N\}$ in the space of probability measures on $C([0,T_0])$. That is, we can find a converging subsequence $\{\cL(Y^{n_k});\;k \in \N\}$ with $\cL(Y^{n_k})$ converging to some $Q\in \cP(C([0,T_0]))$ weakly as $k \to \infty$. By the Skorokhod representation theorem (see e.g. \cite[Theorem~11.7.2]{Dudley2002}) there is a probability space $(\hat{\Omega}, \hat{\mathcal{F}}, \hat{\bP})$ with continuous stochastic processes $\bar{Y}^k$, $k \in \N$, and $\hat{Y}$ on it, such that $(\bar{Y}^k) \stackrel{\mathscr{D}}{\sim} Y^{n_k}$, $k \in \N$, $\hat{Y} \sim Q$, and $\bar{Y}^k \to \hat{Y}$ in $C([0,T_0])$ $\hat{\P}$-a.s. as $k \to \infty$. Since $\bar{Y}^k_0=0$ for all $k \in \N$, we directly get $\hat{Y}_0=0$. 

  Let us define $\mathcal{A}, \mathcal M, \mathcal{A}^k, \mathcal M^{k}$, $k \in \N$, by
  \begin{align*}
    &(\mathcal{A}^k_t f)(y)= \tfrac{1}{2}(C_1 |y_t|^\alpha + r_{n_k}(t))^2 f''(y(t)), \\
    &(\mathcal{A}_t f)(y)= \tfrac{1}{2}(C_1 |y_t|^\alpha + r(t))^2 f''(y(t)), \\
    &\mathcal M^{k,f}_t:=f(\bar{Y}^k_t)-f(\bar{Y}^k_0)-\int_0^t (\mathcal A^k_s f)(\bar{Y}^k) \dd s, \\
    &\mathcal M^f_t:=f(\hat{Y}_t)-f(\hat{Y}_0)-\int_0^t (\mathcal A_s f)(\hat{Y}) \dd s,
  \end{align*}
  for $t \in [0,T_0], \; f \in C_0^2(\R), y \in C([0,T_0])$, and $k \in \N$.
  
  Since $Y^n$, $(\Omega,\mathcal F,\PP)$, $(\mathcal F_t)_{t \in[0, T]}$ are weak solutions to \eqref{eq:frozen} w.r.t $r_n$ for all $n \in \N$ and since $\cL(\bar{Y}^k)=\cL(Y^{n_k})$, by \cite[Corollary~5.4.8]{Karatzas1991}, for every $f \in C_0^2(\R)$, $\mathcal M^{k,f}$ is a local martingale with respect to the natural filtration of $\bar{Y}^k$. Since $f \in C_0^2(\R)$ and the coefficients are bounded on the support of $f'$ and $f''$, $\mathcal M^{k,f}$ is a true martingale.

  For $0 \leq t_1,\ldots,t_m \leq s \leq t \leq T_0$, and $G\in C_b(\R^m)$ we have
  \begin{equation*}
    \hat{\E}[(\mathcal M^{k,f}_t-\mathcal M^{k,f}_s)G(\bar{Y}^k_{t_1},\ldots,\bar{Y}^k_{t_m})] = 0.
  \end{equation*}
  By the uniform convergence $r_{n_k} \to r$ and $\bar Y^{k} \to \hat Y$, and by $f''$ having compact support, dominated convergence yields the corresponding identity for $\hat{Y}$. A monotone-class argument shows that $\mathcal M^f$ is a martingale with respect to the natural filtration of $\hat{Y}$. By localization, $Q=\cL(\hat{Y})$ solves the local martingale problem for \eqref{eq:frozen} in the sense of \cite[Definition~5.4.5]{Karatzas1991} and by \cite[Corollary~5.4.8]{Karatzas1991} $Q$ is the law of a weak solution of \eqref{eq:frozen}.

  \smallskip

  Pathwise uniqueness for \eqref{eq:frozen} implies uniqueness in law, see Yamada--Watanabe theorem in, e.g., \cite[Proposition~5.3.20]{Karatzas1991}, so $Q = \cL(Y)$ and the full sequence converges, i.e. $\cL(Y^n)\to\cL(Y)$ weakly, i.e. for every subsequence we can find a further subsequence, which converges to the same limit. Since the one-dimensional marginals converge and $\{|Y^n_t|\}_{n \in \N}$ is uniformly integrable (fourth moments), we get that $\W_1(\cL(Y^n_t),\cL(Y_t))\to 0$ for all $t\in[0,T_0]$. Therefore, $|\Phi(r_n)(t)-\Phi(r)(t)| \leq L_F \W_1(\cL(Y^n_t),\cL(Y_t)) \to 0$ for every $t\in[0,T_0]$; combined with the uniform $\frac12$-H\"older equicontinuity, the convergence is uniform on $[0,T_0]$. Hence, $\Phi$ is continuous, and Schauder's fixed-point theorem applies.
\end{proof}

\subsection{Proof of Theorem~\ref{thm:main}}\label{subsec: proof of main result}

With the preparations of the previous subsection in place, we are now in a position to prove Theorem~\ref{thm:main}.

\begin{proof}[Proof of Theorem~\ref{thm:main}]
  \emph{(i) Hypotheses.} Space regularity: By the $\alpha$-H\"older continuity of $z\mapsto z^\alpha$ on $[0,\infty)$ we get $|\sigma(t,x,\mu)-\sigma(t,y,\mu)|\le C_1\bigl||x|^{\alpha}-|y|^{\alpha}\bigr| \le C_1|x-y|^{\alpha}$. 

  Measure regularity: $F$ is $L_F$-Lipschitz from $(\cP_1,\W_1)$ to $\R_+$ by definition and for every $\mu,\nu \in \cP_p(\R)$ we have by H\"older's inequality $|\sigma(t,x,\mu)-\sigma(t,x,\nu)| \leq L_F \W_1(\mu,\nu)\leq L_F \W_p(\mu,\nu)$.

  Linear growth: For all $\mu \in \cP_1(\R)$ we have $F(\mu)\leq L_F \W_1(\mu, \delta_0) \leq L_F \|\mu\|_1$, and hence $\sigma(t,x,\mu)\le C_1(1+|x|)+L_F \|\mu\|_1$ for all $x \in \R$, $\mu \in \cP_1(\R)$. Thus \ref{A2} holds with $C_\sigma=\max\{C_1,L_F\}$.
 
  \emph{(ii).} With $X\equiv0$: $\mu_t=\delta_0$, $\sigma(t,X_t,\mu_t) =\sigma(t,0,\delta_0)=0$, so $X_t=0=\int_0^t0\,\dd B$ solves \eqref{eq:MV}.
 
  \emph{(iii).} Apply Proposition~\ref{prop:fixedpoint}. There is a $r^{*}\in K$ with $\Phi(r^{*})=r^{*}$. Let $Y^{\text{loc}}:=Y^{(r^{*})}$ be the strong solution of \eqref{eq:frozen} and $\nu_t:=\cL(Y^{\text{loc}}_t)$. Then
  \begin{equation*}
    F(\nu_t) =\Phi(r^{*})(t)=r^{*}(t),
  \end{equation*}
  so the coefficient actually used in \eqref{eq:frozen} is $C_1|y|^{\alpha}+r^{*}(t)=C_1|y|^{\alpha}+F(\nu_t)=\sigma(t,y,\nu_t)$. Hence, $Y^{\text{loc}}$ solves the McKean--Vlasov equation \eqref{eq:MV} on $[0,T_0]$ with its own law flow, started from $\delta_0$. Membership $r^{*}\in K$ gives 
  \begin{equation*}
    \E[|Y^{\text{loc}}_t|] = \W_1(\nu_t,\delta_0) \geq \frac{F(\nu_t)}{L_F}= \frac{r^*(t)}{L_F}\ge \frac{\varepsilon t^{\gamma}}{L_F},
  \end{equation*}
  with $\varepsilon=\min(\varepsilon_0,1)$, $\varepsilon_0$ as in \eqref{eq:eps0}. By Lemma~\ref{lem:envelope} we have $\E[|Y^{\text{loc}}_{T_0}|^4]<\infty$. Hence, $\nu_{T_0} \in \cP_2(\R)$, so if $T_0<T$ we can use Remark~\ref{prop:existence} and obtain a weak solution of a time-shifted SDE on $[T_0,T]$ with initial condition $\cL(Z_{T_0})=\nu_{T_0}$. Denote the solution process by $Z$ and its deterministic law flow by $m_t:=\cL(Z_t)$, $t \in [T_0,T]$. With this frozen law we consider on the original probability space the SDE
  \begin{equation}\label{eq:new_SDE}
      \d \tilde{Y}_t = (C_1 |\tilde{Y}_t|^\alpha + F(m_t)) \dd B_t, \qquad \tilde{Y}_{T_0}=Y^{\text{loc}}_{T_0}.
  \end{equation}
  Note, that this is an ordinary one-dimensional SDE with its coefficient being a deterministic function of time and space of linear growth, that is $\alpha$-H\"older in space with $\alpha > 1/2$. Therefore, pathwise uniqueness holds for this SDE. Since $Z$ provides weak existence, by the Yamada--Watanabe theorem there is a strong solution $\tilde{Y}$ with uniqueness in law. Therefore, we have $\cL(\tilde{Y}_t) = \cL(Z_t) = m_t$. Substituting this equality into \eqref{eq:new_SDE} we see that $\tilde{Y}$ solves 
  \begin{equation*}
      \d \tilde{Y}_t = (C_1 |\tilde{Y}_t|^\alpha + F(\cL(\tilde{Y}_t))) \dd B_t, \qquad \tilde{Y}_{T_0}=Y^{\text{loc}}_{T_0},
  \end{equation*}
  i.e. $\tilde{Y}$ is itself a solution to the aforementioned McKean--Vlasov SDE. The concatenation $Y^\ast$ of $Y^{\text{loc}}$ and $\tilde{Y}$ then is a strong solution of \eqref{eq:MV} with the choice of \eqref{eq:McKV_counterexample} on $[0,T]$.
\end{proof}

\appendix
\section{Weak existence under Assumption~\ref{ass:main}}\label{sec:appendix}

In this appendix, we prove the existence of weak solutions to the McKean--Vlasov SDE~\eqref{eq:MV} under Assumption~\ref{ass:main}, as stated in Remark~\ref{prop:existence}.

\begin{proof}[Proof of Remark~\ref{prop:existence}]
  We shall check Assumptions~2.2, 2.3, 2.5, 2.6, 2.7 presented in \cite{Hammersley2021} with $v(t,x,\mu)=1+x^2$ and $D_k=(-k,k)$ for all $k \in \N$ in order to apply \cite[Theorem~2.10]{Hammersley2021}. To that end, we consider the diffusion generator $L$, defined as
  \begin{equation*}
    (L v)(t,x,\mu):= 2xb(t,x,\mu)+\sigma(t,x,\mu)^2.
  \end{equation*}

  For \cite[Assumption~2.2 (i)]{Hammersley2021}, using the linear growth condition in Assumption~\ref{ass:main}, we get for every $k \in \N$ and $\mu \in \mathcal{P}(D_k)$ that
  \begin{equation*}
    \int_{D_k}L(t,x,\mu)v(t,x,\mu)\,\mu(\d x)
    = \int_{D_k} 2xb(t,x,\mu)+\sigma(t,x,\mu)^2\, \mu(\d x)
    \leq C \int_{D_k}v(t,x,\mu)\,\mu(\d x).
  \end{equation*}
  \cite[Assumption~2.2 (ii)]{Hammersley2021} clearly holds with the choice $V(t,x) = v(t,x,\mu)$.

  \cite[Assumption~2.3]{Hammersley2021} can be easily checked with the choice
  \begin{equation*}
    \mu_0^k = \mathbbm{1}_{\{|x|< k\}} \mu_0 + \mu_0(\{|x| \geq k\}) \delta_0,\quad  k \in \N.
  \end{equation*}

  Any sequence $(\mu_n)_{n \in \N}$ satisfying the conditions of \cite[Assumption~2.5]{Hammersley2021} with $v$ as chosen above must have uniformly integrable first moments. Hence, $\mu_n \Rightarrow \mu$ in the sense of weak convergence implies $\W_1(\mu_n,\mu)\to 0$ as $n \to \infty$. Now the continuity conditions in Assumption~\ref{ass:main} give \cite[Assumption~2.5]{Hammersley2021}.

  \cite[Assumption~2.6]{Hammersley2021}: On $|x| \leq k$ we have, by the linear growth condition in Assumption~\ref{ass:main}, that
  \begin{equation*}
    |b(t,x,\mu)|+|\sigma(t,x,\mu)| \leq C(1+k+\|\mu\|_1) \leq C_k(1+\|\mu\|_1).
  \end{equation*}

  By moment estimates similar to those used to show \cite[Assumption~2.2 (i)]{Hammersley2021}, one can deduce that
  \begin{equation*}
    \int_\R |b(t,x,\mu)| + |\sigma(t,x,\mu)|^2 \mu(\d x) \leq C \int_\R v(t,x,\mu)\,\mu(\d x),
  \end{equation*}
  giving \cite[Assumption~2.7]{Hammersley2021}.

  Finally, we need to check for $\theta(t)=\exp(- \int_0^t m_1(s)\dd s)$ that
  \begin{equation*}
    \mathcal M(t) = \frac{\varphi_v(\int_\R v(0,x,\mu_0) \mu_0(\d x))}{\theta(t)} + \int_0^t \frac{\theta(s)}{\theta(t)} m_2(s) \dd s
  \end{equation*}
  satisfies $\sup_{t \in [0,T]} \mathcal M(t) < \infty$, where $\varphi_v$, $m_1$, $m_2$ can be chosen depending on the other conditions. Choosing $\varphi_v= id$, $m_1=C$ and $m_2=0$, we get
  \begin{equation*}
    \mathcal M(t) = \frac{\int_\R v(0,x,\mu_0)\, \mu_0(\d x)}{\theta(t)}
  \end{equation*}
  and the claimed inequality holds by the boundedness of the denominator.

  Then, \cite[Theorem~2.10]{Hammersley2021} gives the existence of a weak solution $X$ to the McKean--Vlasov SDE~\eqref{eq:MV} satisfying
  \begin{equation*}
    \sup_{t \in [0,T]} \E[v(t,X_t,\cL(X_t))] = \sup_{t \in [0,T]} 1+ \E[X_t^2] < \infty.
  \end{equation*}
\end{proof}

\bibliography{literature}{}
\bibliographystyle{amsalpha}

\end{document}